\newtheorem{theorem}{Theorem}[section]
\newtheorem{lemma}[theorem]{Lemma}
\newtheorem{proposition}[theorem]{Proposition}
\newtheorem{corollary}[theorem]{Corollary}
\theoremstyle{definition}
\newtheorem{definition}[theorem]{Definition}
\theoremstyle{remark}
\newtheorem{remark}[theorem]{Remark}
\newcommand{\R}{\mathbb{R}}
\newcommand{\C}{\mathbb{C}}
\newcommand{\Smat}{\mathbb{S}}
\newcommand{\Hmat}{\mathbb{H}}
\newcommand{\trace}{\text{trace}}
\let\old@ps@headings\ps@headings
\let\old@ps@IEEEtitlepagestyle\ps@IEEEtitlepagestyle
\def\psccfooter#1{%
    \def\ps@headings{%
        \old@ps@headings%
        \def\@oddfoot{\strut\hfill#1\hfill\strut}%
        \def\@evenfoot{\strut\hfill#1\hfill\strut}%
    }%
    \def\ps@IEEEtitlepagestyle{%
        \old@ps@IEEEtitlepagestyle%
        \def\@oddfoot{\strut\hfill#1\hfill\strut}%
        \def\@evenfoot{\strut\hfill#1\hfill\strut}%
    }%
    \ps@headings%
}
        \parbox{\textwidth}{\hrulefill \\ \small{24th Power Systems Computation Conference} \hfill \begin{minipage}{0.2\textwidth}\centering \vspace*{4pt} \includegraphics[scale=0.06]{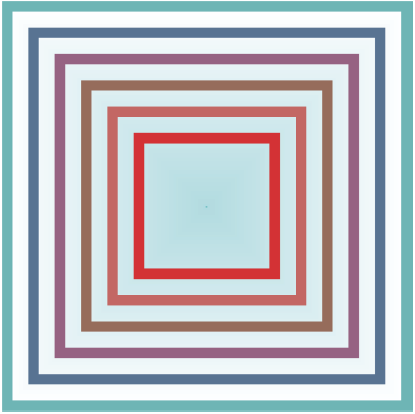}\\\small{PSCC 2026} \end{minipage} \hfill \small{Limassol, Cyprus --- June 8-12, 2026}}%
\begin{document}
%
\title{Advanced Cutting-Plane Algorithms for ACOPF}

\author{
\IEEEauthorblockN{Daniel Bienstock\\}
\IEEEauthorblockA{IEOR, Columbia University\\
New York City, USA\\
dano@columbia.edu}
\and
\IEEEauthorblockN{Mat\'ias Villagra\\}
\IEEEauthorblockA{ IEOR, Columbia University\\
New York City, USA \\
mjv2153@columbia.edu}
}


\maketitle

\begin{abstract}
We propose a disciplined, numerically stable, and scalable approach to SDP relaxations of the ACOPF problem based on linear cutting-planes. Our method can be warm-started and, owing to its linear nature, enables the computation of tight and accurate bounds for large-scale multi-period relaxations -- well beyond what nonlinear convex solvers can achieve. Preliminary experiments show promising results when benchmarked against state-of-the-art bounds on PGLIB instances.
\end{abstract}

\begin{IEEEkeywords}
ACOPF, Convex Relaxations, Linear Optimization, Semidefinite Programming, Outer Approximation.
\end{IEEEkeywords}


\section{Introduction}

In this paper we propose a new paradigm to address semidefinite programming (SDP) relaxations for the Alternating-Current Optimal Power Flow Problem (ACOPF). We leverage SDP tightness by using linear outer-approximations without solving a single SDP, while taking advantage of mature linear programming solver technology. As in~\cite{bienstock+villagra25}, our procedure can naturally operate in \emph{warm-start} mode, which opens the door for efficient reoptimization of sequential and multi-period ACOPF solves. We believe that, given current optimization solver capabilities, this is the right direction for numerically robust, reliable computation of tight lower bounds on massive multi-period or sequential instances from the largest public test sets.

To this end, we first require a rigorous understanding of the different SDP formulations. The literature on convex relaxations for ACOPF is extensive, and it is understood that the SDP relaxations are frequently the tightest. On the other hand, it seems to be folklore knowledge that there are no differences between the real~\cite{bai+etal08,lavaei+low12} and complex~\cite{bose+etal11} SDP relaxations -- it is widely believed that there exists only \emph{one} SDP relaxation. This commonly held and yet, to the best of our knowledge, unproven belief -- please refer to Section~\ref{section:SDP-C_SDP-R} for a discussion -- has played a central role in the literature on SDP relaxations, and, more broadly, on convex relaxations of the ACOPF problem. Our second goal is to address this apparent lack of clarity by proving that the real SDP relaxation is at least as strong as the complex SDP.

In the current state-of-the-art for ACOPF, some interior point methods are empirically successful at computing very good solutions but cannot provide any bounds on solution quality. In our recent paper~\cite{bienstock+villagra25} we develop scalable and numerically stable linearly-constrained relaxations for ACOPF which attain very tight and accurate lower bounds. While our relaxations outer-approximate the Jabr and $i2$ rotated cones~\cite{jabr06,coffrin+etal16a}, empirical results show that the SDP relaxations for ACOPF~\cite{bai+etal08,lavaei+low12,kocuk+etal18}, can yield even tighter bounds, though at a frequently much higher computational cost. Nevertheless, clique decomposition techniques that leverage the sparsity of the network~\cite{fukuda+etal00,molzahn+etal13,andersen+etal20,anjos+etal21} show promise. However, these advances are still dependent on SDP solvers, which often fail to fully scale or guarantee valid bounds (invalid bounds are reported in~\cite{kocuk+etal18, oustry+etal22}). Moreover, strong numerical assumptions on the problem data, such as enforcing lower bounds on the minimum resistance for transmission lines, are typically made and seem to be necessary for solver convergence.




\subsection{Our contributions}

\begin{itemize}
\setlength\itemsep{0.5em}
    \item We propose a linear cutting-plane approach that offers fast and robust theoretical guarantees, providing bounds that closely match those of SDP relaxations; in contrast, relying on nonlinear (convex) relaxations to the SDP cone can yield invalid bounds as reported in e.g., \cite{kocuk+etal18, oustry+etal22}. 
    \item We propose a disciplined numerically stable and scalable approach to the SDP relaxations which exploits low-dimensional structures.
    \item As in~\cite{bienstock+villagra25}, our cutting-plane procedure is warm-startable and is capable of delivering practicable and tight relaxations for multi-period formulations for ACOPF.
    \item We provide new insights regarding two well-known SDP formulations for ACOPF, and provide theoretical justification for the tightness of these relaxations.
    \item Our development of SDP cuts aligns the goal of developing tight relaxations for ACOPF, which do not depend on power grid modeling features.
\end{itemize}

\section{Problem Formulation and SDP Relaxations}

\subsection{ACOPF}

 Let $\mathcal{B}$ denote the set of buses, $\mathcal{E}$ the set of branches and $\mathcal{G}$ the set of generators;  for each bus $k \in \mathcal{B}$, $\mathcal{G}_{k} \subseteq \mathcal{G}$ is the set of generators at $k$. Each bus $k$ has fixed active complex load $S_{k} = P_{k}^{d} + j Q_{k}^{d}$, and lower $\underline{V}_{k} \geq 0$ and upper $\overline{V}_{k} \geq 0$ voltage magnitude limits. For each branch $\{k,m\}$ we are given a thermal limit $0 \leq U_{km} \leq +\infty$. The goal is to find a vector of complex voltage phasors $V$, and active $P^{g}$ and reactive $Q^g$ power generation for every generator $g$, so that power is transmitted by the network to satisfy power demands at minimum cost. Using the so-called \textit{complex-voltage representation}~\cite{bose+etal11} we obtain the following quadratically constrained optimization problem in complex variables: 
\begin{subequations}\label{ACOPF-C}
\begin{align} 
&[\text{ACOPF-C}]: \hspace{2em} \min \hspace{2em} \sum_{k \in \mathcal{G}} F_{k}(P_{k}^{g}) \label{ACOPF-C:theobjective} \\
    &\text{subject } \text{to:} \nonumber\\
    &\forall \, k \in \mathcal{B}: \nonumber \\
    &\ \underline{V}_{k}^{2} \leq |V_{k}|^{2} \leq \overline{V}_{k}^{2}, \label{ACOPF-C:voltlimit} \\
    &\ v_{k}^{(2)} = |V_{k}|^{2} \label{ACOPF-C:def_volt2} \\
    &\sum_{\{k,m\} \in \delta(k)} P_{km} = \sum_{\ell \in \mathcal{G}_{k}} P_{\ell}^{g} - P_{k}^{d}, \label{ACOPF-C:activepowerbal}\\
    &\sum_{\{k,m\} \in \delta(k)} Q_{km} = \sum_{\ell \in \mathcal{G}_{k}} Q_{\ell}^{g} - Q_{k}^{d}, \label{ACOPF-C:reactivepowerbal}\\
    &\forall \{k,m\} \in \mathcal{E}: \nonumber \\
    &\ P_{km} = G_{kk}v_{k}^{(2)} + G_{km} c_{km} + B_{km} s_{km}, \label{ACOPF-C:Pdef_from}  \\
    &\ P_{mk} = G_{mm}v_{m}^{(2)} + G_{mk} c_{km} -  B_{mk} s_{km}, \label{ACOPF-C:Pdef_to} \\
    &\ Q_{km} = - B_{kk}v_{k}^{(2)} - B_{km} c_{km} + G_{km} s_{km}, \label{ACOPF-C:Qdef_from} \\
    &\ Q_{mk} = - B_{mm}v_{m}^{(2)} - B_{mk} c_{km} - G_{mk} s_{km}, \label{ACOPF-C:Qdef_to}\\
    &\ c_{km} = \frac{V_{k}V_{m}^{*} + V_{k}^{*}V_{m}}{2}, \quad s_{km} = \frac{V_{k}V_{m}^{*} - V_{k}^{*}V_{m}}{2j}, \label{ACOPF-C:def_cs}\\
    &\ \max\{ P_{km}^{2} + Q_{km}^{2}, P_{mk}^{2} + Q_{mk}^{2} \} \leq U_{km}^{2} \label{ACOPF-C:capacity} \\
    &\forall k \in \mathcal{G}: \underline{P}^{g}_{k} \leq P_{k}^g \leq \overline{P}^{g}_{k} \quad \underline{Q}^{g}_{k} \leq Q_{k}^g \leq \overline{Q}_{k}. \label{ACOPF-C:genpowerlimit}
\end{align}
\end{subequations}


Above, $G_{kk}, B_{kk}, G_{km}, B_{km}, G_{mk}, B_{mk}, G_{mm}$ and $B_{mm}$ are physical parameters of  branch $\{k,m\}$; \eqref{ACOPF-C:capacity} amounts to flow capacity constraints; \eqref{ACOPF-C:voltlimit} and~\eqref{ACOPF-C:genpowerlimit} impose operational limits; and~\eqref{ACOPF-C:activepowerbal}-\eqref{ACOPF-C:reactivepowerbal} impose active and reactive power balance at each bus $k$. For each $k \in \mathcal{G}$, the functions $F_{k} : \mathbb{R} \to \mathbb{R}$~\eqref{ACOPF-C:theobjective} are usually convex and piecewise-linear or quadratic. Constraints on voltage angle differences are often either not present or concern maximum angle differences that are \textit{small} (smaller than $\pi/2$)); under such conditions, they can be equivalently restated as thermal limit constraints~\eqref{ACOPF-C:capacity}. Hence, we do not explicitly include them in our formulations.


On the other hand, by writing the complex voltage phasors using rectangular coordinates, i.e., $V_{k} = e_{k} + j f_{k}$ for $k \in \mathcal{B}$, we have the following equivalent QCQP formulation in real variables for ACOPF~\cite{bai+etal08}:
\begin{subequations}\label{ACOPF-R}
\begin{align} 
&[\text{ACOPF-R}]: \hspace{2em} \min \hspace{2em} \sum_{k \in \mathcal{G}} F_{k}(P_{k}^{g}) \label{ACOPF-R:theobjective} \\
    &\text{subject } \text{to:} \quad \eqref{ACOPF-C:voltlimit},~\eqref{ACOPF-C:activepowerbal}-\eqref{ACOPF-C:Qdef_to},~\eqref{ACOPF-C:genpowerlimit}-\eqref{ACOPF-C:capacity} \nonumber\\
    &\forall \, k \in \mathcal{B}: \nonumber \\
    &\ v_{k}^{(2)} = e_{k}^{2} + f_{k}^{2}, \label{ACOPF-R:def_volt2} \\
    &\forall \{k,m\} \in \mathcal{E}: \\
    &\ c_{km} = e_{k}e_{m} + f_{k}f_{m}, \quad  s_{km} = e_{m}f_{k} - e_{k}f_{m}, \label{ACOPF-R:def_cskm}
\end{align}
\end{subequations}

Please refer to the survey~\cite{bienstock+etal22} for equivalent ACOPF formulations.

\subsection{SDP relaxations for ACOPF}

We consider three SDP relaxations of ACOPF problem. The first, 
which we denote by SDP-C~\cite{bose+etal11}, is a complex-valued SDP relaxation of~\eqref{ACOPF-C}. The second formulation, which we denote by SDP-R~\cite{shor87}, arises from applying the Shor relaxation~\cite{shor87} to the QCQP~\eqref{ACOPF-R} in rectangular coordinates, where only second-order terms are considered. Finally, SDP-CR is obtained by converting SDP-C into an equivalent real-valued formulation.

Before we introduce the relaxations, we fix some notation. We denote by $n := |\mathcal{B}|$, $(\R^{2n \times 2n},\langle \cdot, \cdot \rangle)$ and $(\C^{n \times n},\langle \cdot, \cdot \rangle)$ the Euclidean spaces of real matrices and complex matrices, respectively. We denote by $(\cdot)^{*}$ the Hermitian transpose. The standard (Frobenius) inner-product $\langle \cdot, \cdot \rangle$ is defined as $\langle A, B \rangle := \trace(A^{*} B)$ for matrices $A,B \in \Smat^{n}$ or $A,B \in \Hmat^{n}$. This inner-product induces the Euclidean norm $|| A || := \sqrt{\langle A, A \rangle}$. We use $A\succeq 0$ to denote that $A$ is positive semidefinite (PSD), interpreting this over $\R$ or $\C$ according to the nature of $A$.  

\begin{remark}
We note that the following semidefinite programming formulations are not presented in standard form. By means of an epigraph reformulation and the Schur complement, see e.g.~\cite{lavaei+low12,bienstock+etal22}, the objective can be expressed as a linear function in the matrix variable. Moreover, by applying the Schur complement again,~\eqref{ACOPF-C:capacity} can be reformulated as a PSD constraint.
\end{remark}

\paragraph{SDP-C}

Following ideas in~\cite{bose+etal11}, constraints~\eqref{ACOPF-C:def_cs} can be written as  $c_{km} = \langle R_{km}, VV^{*} \rangle$ and $s_{km} = \langle T_{mk}, VV^{*} \rangle$ with $R_{km}, T_{km} \in \C^{n \times n}$ Hermitian. Therefore, we can write $c_{km} = \langle R_{km}, X \rangle$ and $s_{km} = \langle T_{mk}, X \rangle$ with $X = VV^{*}$, i.e., $X$ has rank one. We can relax the latter and approximate the outer-product $VV^{*}$ by a PSD matrix $X \in \mathbb{C}^{n \times n}$, and obtain the following SDP relaxation in complex numbers:
\begin{subequations}\label{SDP-C}
\begin{align} 
&[\text{SDP-C}]: \hspace{2em} \min \hspace{2em} \sum_{k \in \mathcal{G}} F_{k}(P_{k}^{g}) \label{SDP-C:theobjective} \\
    &\text{subject } \text{to:} \quad \eqref{ACOPF-C:voltlimit},~\eqref{ACOPF-C:activepowerbal}-\eqref{ACOPF-C:Qdef_to},~\eqref{ACOPF-C:genpowerlimit}-\eqref{ACOPF-C:capacity} \nonumber\\
    &\forall \, k \in \mathcal{B}: \nonumber \\
    &\ v_{k}^{(2)} = X_{kk}, \label{SDP-C:def_volt2} \\
    &\forall \{k,m\} \in \mathcal{E}: \\
    &\ c_{km} = \frac{X_{km} + X_{mk}}{2}, \quad  s_{km} = \frac{X_{km} - X_{mk}}{2j}, \label{SDP-C:def_cskm} \\
    &\ X \succeq 0, \quad X \in \mathbb{C}^{n \times n} \label{SDP-C:psdconstraint}
\end{align}
\end{subequations}


\paragraph{SDP-R} On the other hand, we approximate the outer-product $(e,f)^\top (e,f)$ with a PSD matrix $W \in \mathbb{R}^{2 n \times 2n}$, i.e., we relax the rank constraint in~\eqref{ACOPF-R}, hence we obtain
\begin{subequations}\label{SDP-R:firstformulation}
\begin{align} 
&[\text{SDP-R}]: \hspace{2em} \min \hspace{2em} \sum_{k \in \mathcal{G}} F_{k}(P_{k}^{g}) \label{SDP-R:theobjective} \\
    &\text{subject } \text{to:}  \quad \eqref{ACOPF-C:voltlimit},~\eqref{ACOPF-C:activepowerbal}-\eqref{ACOPF-C:Qdef_to},~\eqref{ACOPF-C:genpowerlimit}-\eqref{ACOPF-C:capacity} \nonumber\\
    &\forall \, k \in \mathcal{B}: \nonumber \\
    &\ v_{k}^{(2)} = W_{kk} + W_{k'k'}, \label{SDP-R:def_volt2} \\
    &\forall \{k,m\} \in \mathcal{E}: \\
    &\ c_{km} = W_{km} + W_{k'm'}, \,  s_{km} = W_{mk'} - W_{km'}, \label{SDP-R:def_cskm} \\
    &\ W \succeq 0, \quad W \in \R^{2n \times 2n} \label{SDP-R:psdconstraint}
\end{align}
\end{subequations}

\paragraph{SDP-CR}

Next, we present SDP-CR. We first elaborate on a procedure, known as \emph{realification} of an Hermitian matrix, which reduces complex semidefinite programs to real ones. Let $\mathbb{H}^{n} \subseteq \mathbb{C}^{n \times n}$ be the subspace of $(n \times n)$-dimensional Hermitian matrices and $\mathbb{B}^{2n} \subseteq \mathbb{S}^{2n}$ the subspace of $(2n\times 2n)$-dimensional real symmetric matrices of the form
    \begin{equation*}
    \begin{pmatrix}
       A  & -B \\
       B  & A
    \end{pmatrix}
    \end{equation*}
where $A, B \in \mathbb{R}^{n \times n}$ with $A = A^{\top}$ and $B^{\top} = - B$. Let $L : \mathbb{H}^{n} \to \mathbb{B}^{2n}$ be the map defined by:
\begin{equation*}
    L(X) := 
    \begin{pmatrix}
        \Re X & - \Im X \\
        \Im X & \Re X
    \end{pmatrix} \quad \text{for $X \in \mathbb{H}^{n}$}.
\end{equation*}
It is straightforward to verify that this map is a linear bijection. The following facts are standard: a proof of Lemma~\ref{quadraticform_realvalued} can be found in any linear algebra text, 
while a proof of Lemma~\ref{realification_psd} can be found in~\cite{goemans+williamson04}.

\begin{lemma}\label{quadraticform_realvalued}
    Let $X \in \mathbb{C}^{n}$. Then the quadratic form $\mathbb{C}^{n} \to \mathbb{C}$ defined as $u \mapsto u^* X u$ is real-valued.
\end{lemma}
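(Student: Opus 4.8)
The plan is to use the standard characterization of real numbers among complex scalars: a number $z \in \mathbb{C}$ is real if and only if it equals its own complex conjugate. Since $z := u^* X u$ is a $1 \times 1$ complex quantity, its complex conjugate coincides with its conjugate (Hermitian) transpose, so it suffices to verify the identity $z^* = z$.

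Concretely, I would set $z := u^* X u$ and compute $z^*$ by applying the reversal rule for the conjugate transpose, $(ABC)^* = C^* B^* A^*$, together with the involution $(u^*)^* = u$. This gives $z^* = (u^* X u)^* = u^* X^* u$. The one genuinely necessary ingredient — and implicitly the intended hypothesis, since the claim is false without it — is that $X$ is \emph{Hermitian}, i.e.\ $X^* = X$; this is consistent with the ambient setting $X \in \mathbb{H}^n$ used to define the realification map $L$. Substituting $X^* = X$ yields $z^* = u^* X u = z$, and hence $z \in \mathbb{R}$, for every $u \in \mathbb{C}^n$.

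There is essentially no obstacle here: the argument is a two-line manipulation of the conjugate transpose, requiring no spectral theory, eigenvalue decomposition, or appeal to the structure of $L$. The only point deserving care is recording that $X$ must be Hermitian rather than an arbitrary complex matrix — otherwise $u^* X u$ need not be real (for instance, a skew-Hermitian $X$ produces a purely imaginary form). I would therefore state the lemma with the hypothesis $X \in \mathbb{H}^n$ and let the algebraic identity above close the proof immediately.
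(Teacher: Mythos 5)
Your proof is correct and is exactly the standard argument the paper has in mind: the paper itself gives no proof, deferring to ``any linear algebra text,'' and the two-line computation $z^* = (u^* X u)^* = u^* X^* u = u^* X u = z$ is precisely what such texts contain. You are also right that the stated hypothesis $X \in \mathbb{C}^n$ is a typo for $X \in \mathbb{H}^n$ (Hermitian), without which the claim fails, as your skew-Hermitian counterexample shows; this matches the ambient setting of the realification map $L$ in the paper.
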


\begin{lemma}\label{realification_psd}
    Let $X \in \mathbb{H}^{n \times n}$. Then $X \succeq 0$ if and only if $L(X) \succeq 0$.
\end{lemma}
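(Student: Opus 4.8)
The plan is to reduce the positive-semidefiniteness of $X$ to that of $L(X)$ by exhibiting an explicit correspondence between the complex quadratic form $u \mapsto u^{*}Xu$ on $\C^{n}$ and the real quadratic form $w \mapsto w^{\top}L(X)w$ on $\R^{2n}$. The bridge is the natural $\R$-linear bijection $\C^{n}\to\R^{2n}$ sending $u = a + jb$, with $a,b\in\R^{n}$, to the stacked vector $w = (a,b)^{\top}$. Establishing that this bijection intertwines the two quadratic forms is the heart of the argument; once we have it, both implications fall out immediately because $X\succeq 0$ and $L(X)\succeq 0$ are exactly the statements that these forms are nonnegative on all of $\C^{n}$ and all of $\R^{2n}$, respectively.

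First I would write $X = A + jB$ with $A := \Re X$ and $B := \Im X$, and record that the Hermitian hypothesis $X = X^{*}$ forces $A = A^{\top}$ and $B = -B^{\top}$. This symmetry/skew-symmetry is precisely what places $L(X)$ in the target subspace $\mathbb{B}^{2n}$, and it is also what Lemma~\ref{quadraticform_realvalued} relies on to guarantee that $u^{*}Xu$ is real-valued.

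The key computation is then to expand both sides. Writing $u^{*}Xu = (a - jb)^{\top}(A + jB)(a + jb)$ and collecting terms, the real part equals $a^{\top}Aa + b^{\top}Ab - a^{\top}Bb + b^{\top}Ba$. Independently, with $w = (a,b)^{\top}$ and $L(X) = \left(\begin{smallmatrix} A & -B \\ B & A \end{smallmatrix}\right)$, one computes $w^{\top}L(X)w = a^{\top}Aa + b^{\top}Ab - a^{\top}Bb + b^{\top}Ba$, which is the same expression. Invoking Lemma~\ref{quadraticform_realvalued} to discard the imaginary part of $u^{*}Xu$, we obtain the clean identity $u^{*}Xu = w^{\top}L(X)w$ for every $u\in\C^{n}$ and its image $w\in\R^{2n}$. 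Since $u \mapsto w$ is a bijection, the quantifiers ``for all $u$'' and ``for all $w$'' range over matched pairs, so $u^{*}Xu \geq 0$ for all $u$ if and only if $w^{\top}L(X)w \geq 0$ for all $w$; that is, $X\succeq 0$ iff $L(X)\succeq 0$.

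I expect the main obstacle to be purely the bookkeeping in the expansion: carrying the four cross terms correctly and using the skew-symmetry $B^{\top} = -B$ to see both that the imaginary part vanishes and that the surviving real cross terms are consistent (indeed $-a^{\top}Bb = b^{\top}Ba$). It is worth leaning on Lemma~\ref{quadraticform_realvalued} for the vanishing of the imaginary part rather than cancelling it by hand, as this keeps the Hermitian hypothesis in exactly one clearly-identified place and avoids a second, redundant computation.
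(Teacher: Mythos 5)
Your proof is correct: the identity $u^{*}Xu = w^{\top}L(X)w$ under the bijection $a + jb \mapsto (a,b)^{\top}$, together with surjectivity of that map to match the quantifiers, is exactly the standard argument, and your bookkeeping (real part matching, imaginary part vanishing by Hermitianness) checks out. Note that the paper itself gives no inline proof of this lemma but defers to~\cite{goemans+williamson04}, whose proof is essentially the same quadratic-form correspondence you wrote, so your attempt supplies a self-contained version of the cited argument rather than a genuinely different route.
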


Therefore, SDP-CR is obtained by applying the realification to $X$ in SDP-C. Moreover, in~\cite{goemans+williamson04} it is shown that these two SDP relaxations are equivalent, in the sense that an optimal solution to SDP-C can be obtained from an optimal solution to SDP-CR, and vice versa.

\subsection{Brief review on prior work on SDP relaxations}

There is an extensive literature on SDP relaxations for the ACOPF problem. In~\cite{bai+etal08}, the real SDP relaxation (SDP-R) was introduced and later further developed in~\cite{lavaei+low12}. Conversely, the complex SDP relaxation (SDP-C) was proposed in~\cite{bose+etal11} and subsequently used in~\cite{bose+etal12} to establish the equivalence between the branch-flow and bus-injection models of ACOPF.
Chordal sparsity techniques were later explored in~\cite{molzahn+etal13,andersen+etal14,andersen+etal20,anjos+etal21}. Finally, moment relaxations have been studied in~\cite{molzahn+hiskens15,josz+etal15,josz+molzahn18}. In all of these works, a small minimum resistance is enforced on all transmission lines, which appears to be necessary for achieving numerical convergence of the corresponding SDP implementations. In contrast, our algorithms make no assumptions on the data.

We also note that in~\cite{kocuk+etal18}, cuts are introduced to strengthen an SOC relaxation of ACOPF, although their computation requires solving an SDP. Other types of cuts have also been proposed, such as the polynomial determinant cuts in~\cite{hijazi+etal16}, which, however, are nonlinear polynomial inequalities.

\section{On the relationship between SDP-R and SDP-CR}\label{section:SDP-C_SDP-R}

In this section, we analyze the properties of the SDP-R relaxation, identify statements in the literature concerning its relationship with SDP-CR that are incorrect, and point out assumptions that should not have been made.

A widespread belief is that SDP-C and SDP-R are equivalent. To the best of our knowledge, nearly all papers on SDP relaxations for the ACOPF problem~\cite{bose+etal11,bose+etal12,lavaei+low12,molzahn+etal13, andersen+etal14, molzahn+hiskens15, josz+etal15, hijazi+etal16, coffrin+etal16a,josz+molzahn18,  andersen+etal20, anjos+etal21} study one of these formulations, typically SDP-C or its equivalent form SDP-CR, without comparing them, implicitly treating them as reformulations of one another. Some works even refer to either formulation as \emph{the} SDP relaxation for ACOPF.


In~\cite{taylor15}, an effort was made to unify various SDP relaxations within a common framework. In particular, \cite{taylor15} claims that the matrix variable $W$ in SDP-R is equivalent, up to an orthogonal similarity transformation, to the matrix variable $L(X)$ in SDP-CR, i.e., that there exists some orthogonal $Q \in \mathbb{R}^{2n \times 2n}$ such that $W = Q^\top L(X) Q$. This statement has subsequently been cited in works such as~\cite{kocuk+etal16,kocuk+etal18}. Unfortunately, this statement is incorrect (see Appendix~\ref{appendix:W_and_L(X)} for a proof). This commonly held yet unproven belief has played a central role in the literature on SDP relaxations, and, more broadly, on convex relaxations of the ACOPF problem.

This issue is significant because relaxations are typically evaluated relative to one another. For example, the perceived superiority of \emph{the} SDP relaxation over the Jabr SOC relaxation~\cite{jabr06} rests on the fact that the positive semidefiniteness of the matrix variable in SDP-C implies the Jabr inequality. However, to the best of our knowledge, no proof has been established showing that SDP-R also implies this inequality. This distinction affects all works comparing convex SDP formulations for ACOPF, particularly those that rely on the presumed strength of SDP-R under the (unproven) assumption that it enforces fundamental inequalities such as the Jabr inequality.

However, we have a positive result: below we show that SDP-R is at least as strong as SDP-C. We do this by exhibiting a feasible solution to SDP-C from a feasible solution to SDP-R, both with same objective value.

\begin{definition}\label{definition:SDP-R_to_SDP-C}
    Let $W \in \mathbb{R}^{2n \times 2n}$ be an arbitrary symmetric matrix. We define a Hermitian matrix $X^{W} \in \C^{n \times n}$ associated to $W$ as follows: 
    \begin{align*}
        X^{W}_{kk} :&= W_{kk} + W_{k'k'} \\
        \Re X^{W}_{km} :&= W_{km} + W_{k'm'} \\
        \Im X^{W}_{km} :&= W_{km'} - W_{mk'}.
    \end{align*}
\end{definition}

\begin{theorem}\label{theorem:W_PSD_X^W_PSD}
    Let $W \in \mathbb{R}^{2n \times 2n}$ be an arbitrary symmetric matrix. Then $W \succeq 0$ implies $X^{W} \succeq 0$.
\end{theorem}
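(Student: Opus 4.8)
My plan is to reduce the claim to the realification map $L$ of Lemma~\ref{realification_psd} and then to realize $L(X^{W})$ as a sum of two congruence transforms of $W$, each of which is automatically positive semidefinite. First I would fix the index convention implicit in Definition~\ref{definition:SDP-R_to_SDP-C}, namely $k' = k+n$, and partition the symmetric matrix $W$ into $n\times n$ blocks
\[
W = \begin{pmatrix} A & B \\ B^{\top} & C \end{pmatrix}, \qquad A = A^{\top}, \quad C = C^{\top}.
\]
With this notation, Definition~\ref{definition:SDP-R_to_SDP-C} unwinds to $\Re X^{W} = A + C$ and $\Im X^{W} = B - B^{\top}$. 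Since $A+C$ is symmetric and $B-B^{\top}$ is skew-symmetric, $X^{W}$ is genuinely Hermitian, so Lemma~\ref{realification_psd} applies and it suffices to prove $L(X^{W}) \succeq 0$.

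The key step is to write the realification explicitly,
\[
L(X^{W}) = \begin{pmatrix} A+C & B^{\top}-B \\ B-B^{\top} & A+C \end{pmatrix},
\]
and to observe that it is generated from $W$ by two sign-permutation congruences. Introducing the real matrices $D = \begin{pmatrix} I & 0 \\ 0 & -I \end{pmatrix}$ and $S = \begin{pmatrix} 0 & I \\ I & 0 \end{pmatrix}$, I would verify by direct block multiplication that $D^{\top} W D + S^{\top} W S = L(X^{W})$. Since congruence by a real matrix preserves positive semidefiniteness, $W \succeq 0$ yields $D^{\top} W D \succeq 0$ and $S^{\top} W S \succeq 0$; the PSD cone is closed under addition, so $L(X^{W}) \succeq 0$, and Lemma~\ref{realification_psd} then gives $X^{W} \succeq 0$.

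The conceptual obstacle is exactly the point made earlier in this section: one must \emph{not} hope that $W$ and $L(X^{W})$ are orthogonally similar (Taylor's claim), so a single congruence cannot work. The nontrivial idea is that $L(X^{W})$ instead arises as a ``pinching'', i.e.\ an average of two congruences of $W$ --- the sign flip $D$ cancels the non-skew part of the coupling block, while the swap $S$ folds the two diagonal blocks onto $A+C$. Equivalently, and perhaps more transparently, one may check the identity at the level of quadratic forms: for every $p,q \in \R^{n}$,
\[
\begin{pmatrix} p \\ q \end{pmatrix}^{\!\top} \! L(X^{W}) \begin{pmatrix} p \\ q \end{pmatrix}
= \begin{pmatrix} p \\ -q \end{pmatrix}^{\!\top} \! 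W \begin{pmatrix} p \\ -q \end{pmatrix}
+ \begin{pmatrix} q \\ p \end{pmatrix}^{\!\top} \! W \begin{pmatrix} q \\ p \end{pmatrix},
\]
and both summands on the right are nonnegative when $W \succeq 0$. The only genuine care required is the bookkeeping of block indices and signs coming from Definition~\ref{definition:SDP-R_to_SDP-C} and from the definition of $L$; once the identity $D^{\top}WD + S^{\top}WS = L(X^{W})$ is confirmed, the conclusion is immediate.
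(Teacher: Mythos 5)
Your proof is correct, but it takes a genuinely different route from the paper's. The paper argues on the complex side: it turns $W$ into a semi-inner product $\langle u,v\rangle_{W} := u^{*}Wv$ on $\mathbb{C}^{2n}$ (invoking Lemma~\ref{realification_psd} for $W$ itself to get $\langle u,u\rangle_{W}\geq 0$ over the complex field), evaluates it at the vectors $z_{k}=\delta_{k}+j\delta_{k'}$, and identifies $X^{W}$ as the resulting Gram matrix, which is automatically PSD. You instead stay entirely on the real side: after checking that $X^{W}$ is Hermitian, you exhibit the explicit identity $L(X^{W}) = D^{\top}WD + S^{\top}WS$ and use closure of the PSD cone under congruence and sums, invoking Lemma~\ref{realification_psd} only at the end, applied to $X^{W}$ rather than to $W$. (I verified your block computation: $D^{\top}WD = \bigl(\begin{smallmatrix} A & -B \\ -B^{\top} & C \end{smallmatrix}\bigr)$, $S^{\top}WS = \bigl(\begin{smallmatrix} C & B^{\top} \\ B & A \end{smallmatrix}\bigr)$, and their sum is indeed $L(X^{W})$.) The two arguments meet at the same core computation: writing $y_{k}=p_{k}+jq_{k}$ and $u=\sum_{k}y_{k}z_{k}$ in the paper's proof gives $\Re u = (p,-q)$, $\Im u = (q,p)$, and $y^{*}X^{W}y = u^{*}Wu = (p,-q)^{\top}W(p,-q) + (q,p)^{\top}W(q,p)$, which is precisely your quadratic-form identity. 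What your version buys is concreteness: everything is finite block-matrix algebra verifiable by direct multiplication, with no semi-inner-product axioms to check, and the two-congruence ("pinching") structure resonates nicely with the paper's own observation that $W$ and $L(X)$ are not related by a single orthogonal similarity. What the paper's Gram-matrix view buys is that it localizes immediately to principal submatrices, which is how the subsequent Corollary (the Jabr and permuted Jabr inequalities) is derived; your identity restricts equally well to $W_{\{k,m,k',m'\}}$, so nothing is lost, but the Gram formulation makes that step read off directly.
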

\begin{proof}
    Since $W$ is PSD over the reals, the following bilinear map is a semi-inner product in $\mathbb{C}^{n}$: 
    $$\langle u, v \rangle_{W} := u^{*} W v, \hspace{2em} \forall u,v \in \C^{n}.$$ 
    This follows because $\langle \cdot, \cdot \rangle_{W}$ satisfies: (i) \emph{linearity} in the second argument; (ii) \emph{conjugate symmetry}, i.e., $\langle u, v \rangle_{W} = \langle v, u \rangle_{W}^{*}$; and (iii) \emph{semipositivity}, i.e., it holds that $\langle u, u \rangle_{W} \geq 0$ for every $u \in \mathbb{C}^n$, by Lemma~\ref{realification_psd} since $L(W)$ is a block-diagonal PSD matrix.

    Consider the following points in $\C^{n}$: $z_{k} := \delta_{k} + j \delta_{k'}$ for $k \in \mathcal{B}$, where $\delta_{k}$ denotes the $k$th canonical basis vector in $\R^{2n}$. We note that
    \begin{align*}
        \langle z_{k}, z_{k} \rangle_{W} &= \delta_{k}^{*} W \delta_{k} + \delta_{k'}^{*} W \delta_{k'} = W_{kk} + W_{k'k'}  \\
        \langle z_{k}, z_{m} \rangle_{W} &= \delta_{k}^{*} W \delta_{m} + j \delta_{k}^{*} W \delta_{m'} -  j \delta_{k'}^{*} W \delta_{m} + \delta_{k'}^{*} W \delta_{m'} \\
        &= (W_{km} + W_{k'm'}) + j (W_{km'} - W_{mk'}) \\
        \langle z_{m}, z_{k} \rangle_{W} &= \delta_{m}^{*} W \delta_{k} + j \delta_{m}^{*} W \delta_{k'} -  j \delta_{m'}^{*} W \delta_{k} + \delta_{m'}^{*} W \delta_{k'} \\
        &= (W_{km} + W_{k'm'}) + j (W_{mk'} - W_{km'})
        \end{align*}
    where we used the fact that $W$ is symmetric. Next, we construct the following ``semi'' Gram matrix in $\mathbb{C}^{n \times n}$: for $k,m \in \mathcal{B}$,
    \begin{align*}
        G_{km} :&=  \left\{
        \begin{array}{cc}
        \langle z_{k}, z_{k} \rangle_{W}     & \mbox{ if $k = m$}, \\
        \langle z_{k}, z_{m} \rangle_{W}     & \mbox{ if $k \neq m$}.
        \end{array}
        \right.
   \end{align*}
   Since $\langle z_{m}, z_{k} \rangle_{W} = \langle z_{k}, z_{m} \rangle_{W}^{*}$, we deduce that $G$ is Hermitian. We claim that $G$ is PSD over $\mathbb{C}^n$. Indeed, let $y \in \mathbb{C}^{n}$ be arbitrary, then
   \begin{align*}
       y^{*} G y &= \sum_{k=1}^{n} \sum_{m=1}^{n} y_{k}^{*} y_{m} \langle z_{k},z_{m} \rangle_{W} \\
       &= \sum_{k=1}^{n} \sum_{m=1}^{n} \langle y_{k} z_{k}, y_{m} z_{m} \rangle_{W} \\
       &= \sum_{k=1}^{n} \langle y_{k} z_{k}, \sum_{m=1}^{n} y_{m} z_{m} \rangle_{W} \\
       &= \langle \sum_{k=1}^{n} y_{k} z_{k} , \sum_{k=1}^{n} y_{k} z_{k}  \rangle_{W} \\
       &\geq 0
   \end{align*}
   where the second equality follows since $\langle \cdot, \cdot \rangle_{W}$ satisfies conjugate symmetry and it is linear in the second argument, and nonnegativity follows by the semipositivity of $\langle \cdot, \cdot \rangle_{W}$. Thus, given that $G = X^{W}$, we conclude that $X^{W}$ is PSD.
\end{proof}

Moreover, from our results above we can deduce that the Jabr inequality is implied by the set of feasible solutions to SDP-W. In particular, it is implied by the constraint $W \succeq 0$. Notably, we uncover what we term \emph{permuted Jabr inequalities}, by exploiting the permutation symmetries of~\eqref{jabr_W}. This provides a theoretical justification for the tightness of the SDP-R relaxation since the \emph{loss inequalities} are implied by the Jabr inequality; see Section 4.3~\cite{bienstock+villagra25} for a discussion.

Let $A \in \mathbb{K}^{r \times r}$ where $K \in \{ \R, \C\}$ and $r \in \mathbb{N}$. We denote by $A_{R}$ the principal submatrix of $A$ given by a set $R \subseteq [r]$ of row and column indices.

\begin{corollary}
Consider any $2 \times 2$ principal submatrix of $X^{W}$
\begin{equation*}
X^{W}_{\{k,m\}} := 
\left(
    \begin{array}{cc}
        X^{W}_{kk} & \Re X^{W}_{km} + j \Im X^{W}_{km}  \\
        \Re X^{W}_{mk} + j \Im X^{W}_{km} & X^{W}_{mm}
    \end{array}
\right).
\end{equation*}
If $W \succeq 0$, then $X^{W}_{\{k,m\}} \succeq 0$, i.e., $X_{kk}^{W}, X_{mm}^{W} \geq 0$ and
\begin{equation}\label{jabr_X}
    (\Re X_{km}^{W})^{2} + (\Im X_{km}^{W})^{2} \leq X_{kk}^{W} X_{mm}^{W}.
\end{equation}
In particular, the following inequality holds
\begin{equation}\label{jabr_W}
\scalebox{0.72}{$
    (W_{km} + W_{k'm'})^2 + (W_{mk'} - W_{km'})^2 \leq (W_{kk} + W_{k'k'})(W_{mm} + W_{m'm'}).
$}
\end{equation}
Moreover, for any row-column permutation $\sigma$ of the submatrix $W_{\{k,m,k',m'\}}$, the following inequality also holds:
\begin{align}\label{permuted_jabr_W}
  c_{\sigma}^{2} + s_{\sigma}^{2} \leq r_{\sigma}t_{\sigma}.
\end{align}
where $c_{\sigma} := W_{\sigma(k)\sigma(m)} + W_{\sigma(k')\sigma(m')}$, $s_{\sigma} := W_{\sigma(k)\sigma(m')} - W_{\sigma(m)\sigma(k')} $, $r_{\sigma} := W_{\sigma(k)\sigma(k)} + W_{\sigma(k')\sigma(k')}$, $t_{\sigma}:= W_{\sigma(m)\sigma(m)} + W_{\sigma(m')\sigma(m')}$. 
\end{corollary}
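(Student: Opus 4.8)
The plan is to derive everything from Theorem~\ref{theorem:W_PSD_X^W_PSD} together with two elementary facts: principal submatrices of a PSD matrix are PSD, and a $2 \times 2$ Hermitian PSD matrix has nonnegative diagonal entries and nonnegative determinant. So this corollary really is a corollary, with no new PSD argument needed beyond bookkeeping and one observation about permutations.

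First I would invoke Theorem~\ref{theorem:W_PSD_X^W_PSD} to conclude that $W \succeq 0$ yields $X^{W} \succeq 0$. Restricting to the rows and columns indexed by $\{k,m\}$, the principal submatrix $X^{W}_{\{k,m\}}$ inherits positive semidefiniteness. This immediately gives $X^{W}_{kk}, X^{W}_{mm} \geq 0$, and expanding the nonnegativity of the determinant of a Hermitian $2 \times 2$ PSD matrix gives $X^{W}_{kk} X^{W}_{mm} \geq |X^{W}_{km}|^{2} = (\Re X^{W}_{km})^{2} + (\Im X^{W}_{km})^{2}$, which is exactly~\eqref{jabr_X}. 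To obtain~\eqref{jabr_W}, I would then substitute the entries from Definition~\ref{definition:SDP-R_to_SDP-C} into~\eqref{jabr_X}: replacing $X^{W}_{kk}$ by $W_{kk} + W_{k'k'}$, $X^{W}_{mm}$ by $W_{mm} + W_{m'm'}$, $\Re X^{W}_{km}$ by $W_{km} + W_{k'm'}$, and $\Im X^{W}_{km}$ by $W_{km'} - W_{mk'}$, whose square equals $(W_{mk'} - W_{km'})^{2}$ as written. This is purely routine.

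For the permuted inequalities~\eqref{permuted_jabr_W}, the key observation is that positive semidefiniteness is preserved under a symmetric permutation of rows and columns: a permutation matrix $P_{\sigma}$ is orthogonal, so $P_{\sigma}^{\top} W_{\{k,m,k',m'\}} P_{\sigma} \succeq 0$ whenever $W_{\{k,m,k',m'\}} \succeq 0$. I would therefore pass to the $4 \times 4$ principal submatrix $W_{\{k,m,k',m'\}}$ — itself PSD as a principal submatrix of $W$ — and re-run Definition~\ref{definition:SDP-R_to_SDP-C} and Theorem~\ref{theorem:W_PSD_X^W_PSD} with $n = 2$ on the permuted matrix $P_{\sigma}^{\top} W_{\{k,m,k',m'\}} P_{\sigma}$. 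Applying the first two steps to this permuted matrix produces precisely the Jabr inequality in the permuted variables $c_{\sigma}, s_{\sigma}, r_{\sigma}, t_{\sigma}$, i.e.~\eqref{permuted_jabr_W}.

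The one subtlety I would state carefully is that the construction of $X^{W}$ depends only on the \emph{structural roles} of the four indices playing the parts of $k$, $m$, $k'$, $m'$, not on their numerical labels; relabeling these roles via $\sigma$ and reapplying Theorem~\ref{theorem:W_PSD_X^W_PSD} is therefore legitimate, and the orthogonal-invariance of the PSD cone is exactly what guarantees that each relabeled $4 \times 4$ block remains PSD. Since there is no genuinely new positive-semidefiniteness argument beyond this observation, I expect the only real care to lie in bookkeeping the index convention $k' = k + n$, so that the entries $c_{\sigma}, s_{\sigma}, r_{\sigma}, t_{\sigma}$ are correctly matched to the permuted submatrix rather than being a likely source of any deeper difficulty.
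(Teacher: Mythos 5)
Your proposal is correct and takes essentially the same route as the paper: both arguments rest on Theorem~\ref{theorem:W_PSD_X^W_PSD}, the fact that principal submatrices of PSD matrices are PSD, the $2\times 2$ Hermitian determinant condition for~\eqref{jabr_X}--\eqref{jabr_W}, and the observation that a row-column permutation is an orthogonal (hence PSD-preserving) similarity transformation for~\eqref{permuted_jabr_W}. The only cosmetic difference is that the paper applies Theorem~\ref{theorem:W_PSD_X^W_PSD} directly to the $4\times 4$ principal submatrix $W_{\{k,m,k',m'\}}$ (with $n=2$), whereas you first apply it to the full $W$ and then restrict $X^{W}$ to the indices $\{k,m\}$ -- equivalent bookkeeping, since the entries of $X^{W}$ indexed by $k,m$ depend only on the entries of $W$ indexed by $k,m,k',m'$.
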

\begin{proof}
   The first claim follows by a direct application of Theorem~\ref{theorem:W_PSD_X^W_PSD} to the principal submatrix $W_{\{k,m,k',m'\}}$ of $W$. The inequality~\ref{jabr_W} follows immediately by~\eqref{jabr_X} and the definition of $X^{W}$. Finally, inequality~\eqref{permuted_jabr_W} holds for any permutation $\sigma$ on $\{k,m,k',m'\}$ since any row-column permutation of $W_{\{k,m,k',m'\}}$ can be represented by a similarity transformation $Q^{T} W_{\{k,m,k',m'\}} Q$ where $Q$ is a permutation matrix (in particular orthogonal). Since similarity transformations are PSD-preserving we are done.
\end{proof}

Below we state and prove the main result of this section.

\begin{theorem}\label{theorem:SDP-R_SDP-C}
    Let $z_{C}$ and $z_{R}$ denote the optimal values of SDP-C and SDP-R, respectively. Then $z_{C} \leq z_{R}$.
\end{theorem}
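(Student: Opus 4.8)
The plan is to show that every feasible point of SDP-R can be mapped to a feasible point of SDP-C with the same objective value; applying this to an optimal SDP-R solution attaining $z_R$ then produces a feasible SDP-C point of value $z_R$, whence $z_C \le z_R$ by minimality. The key structural observation is that the objective $\sum_{k\in\mathcal{G}} F_k(P_k^g)$ and all of the constraints~\eqref{ACOPF-C:voltlimit},~\eqref{ACOPF-C:activepowerbal}--\eqref{ACOPF-C:Qdef_to},~\eqref{ACOPF-C:genpowerlimit}--\eqref{ACOPF-C:capacity} are shared verbatim by the two relaxations and involve only the ``physical'' variables $v^{(2)}, c, s, P, Q, P^g, Q^g$; they never reference the matrix variable directly. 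Consequently the two formulations differ only in the affine links coupling the matrix to $(v^{(2)},c,s)$ and in the cone constraint.

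First I would fix an optimal solution of SDP-R, consisting of a matrix $W \succeq 0$ together with physical variables attaining $z_R$, and build a candidate SDP-C solution that leaves every physical variable untouched and replaces $W$ by the Hermitian matrix $X^W$ of Definition~\ref{definition:SDP-R_to_SDP-C}. Since the physical variables are unchanged, the shared constraints and the objective value transfer for free, so only two items remain: positive semidefiniteness of the matrix and the three SDP-C links~\eqref{SDP-C:def_volt2}--\eqref{SDP-C:def_cskm}. The semidefiniteness---normally the crux of such an argument---is handed to us directly by Theorem~\ref{theorem:W_PSD_X^W_PSD}, which yields $X^W \succeq 0$ from $W \succeq 0$ and thereby establishes~\eqref{SDP-C:psdconstraint}.

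It then remains to verify the links against Definition~\ref{definition:SDP-R_to_SDP-C}. Using that $X^W$ is Hermitian one has $\tfrac{X^W_{km}+X^W_{mk}}{2} = \Re X^W_{km}$ and $\tfrac{X^W_{km}-X^W_{mk}}{2j} = \Im X^W_{km}$, so the checks reduce to $X^W_{kk} = W_{kk}+W_{k'k'} = v^{(2)}_k$, $\Re X^W_{km} = W_{km}+W_{k'm'} = c_{km}$, and $\Im X^W_{km} = s_{km}$, each read off from~\eqref{SDP-R:def_volt2}--\eqref{SDP-R:def_cskm}. The one step I expect to require care is this last, imaginary-part identity: Definition~\ref{definition:SDP-R_to_SDP-C} sets $\Im X^W_{km} = W_{km'}-W_{mk'}$, whereas~\eqref{SDP-R:def_cskm} defines $s_{km} = W_{mk'}-W_{km'}$, and these differ by a sign. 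The clean resolution is to feed SDP-C the conjugate $\overline{X^W}$ (equivalently $(X^W)^{\top}$, since $X^W$ is Hermitian) rather than $X^W$ itself: conjugation fixes the diagonal and the real parts, flips the sign of the imaginary parts so that $s_{km}$ matches exactly, and preserves positive semidefiniteness because $\overline{X^W}\succeq 0$ if and only if $X^W \succeq 0$ for Hermitian matrices. With this choice all SDP-C constraints hold and the objective equals $z_R$, so the constructed point is feasible for SDP-C with value $z_R$, and we conclude $z_C \le z_R$.
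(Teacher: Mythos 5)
Your proof follows essentially the same route as the paper's: keep all physical variables $(\hat{P}^{g},\hat{Q}^{g},\hat{P},\hat{Q},\hat{c},\hat{s},\hat{v}^{(2)})$ fixed, build a Hermitian matrix from $\hat{W}$ via Definition~\ref{definition:SDP-R_to_SDP-C}, invoke Theorem~\ref{theorem:W_PSD_X^W_PSD} for positive semidefiniteness, and verify the linking constraints~\eqref{SDP-C:def_volt2}--\eqref{SDP-C:def_cskm}. Where you genuinely add value is the sign issue you flag: it is real, and the paper's own proof glosses over it. Constraint~\eqref{SDP-R:def_cskm} sets $s_{km} = W_{mk'} - W_{km'}$, while Definition~\ref{definition:SDP-R_to_SDP-C} sets $\Im X^{W}_{km} = W_{km'} - W_{mk'}$; the paper's proof asserts ``$s_{km} = \hat{W}_{km'} - \hat{W}_{mk'}$,'' thereby misquoting its own constraint, so as written its verification of~\eqref{SDP-C:def_cskm} does not close without a patch. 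Your fix --- handing SDP-C the matrix $\overline{X^{W}} = (X^{W})^{\top}$, which preserves the diagonal, the real parts, and positive semidefiniteness while flipping the imaginary parts --- is valid and resolves the mismatch; equivalently, one could repair the sign in Definition~\ref{definition:SDP-R_to_SDP-C} itself, under which Theorem~\ref{theorem:W_PSD_X^W_PSD} still holds (run its Gram-matrix argument with $z_{k} = \delta_{k} - j\delta_{k'}$). Two minor points of hygiene that the paper handles and you should too: dispense first with the case where SDP-R is infeasible ($z_{R} = +\infty$, the claim is vacuous), and, since the infimum need not be attained, apply the map to an \emph{arbitrary} feasible point of SDP-R rather than to an optimizer --- which is exactly what the first sentence of your plan already provides, so the conclusion $z_{C} \leq z_{R}$ follows by taking the infimum over feasible points.
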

\begin{proof}
    If SDP-R is infeasible, then the claim is vacuously true since $z_{R} = + \infty$. Next, suppose that SDP-R is feasible and let $y_{R}:= (\hat{P}^{g},\hat{Q}^{g},\hat{W},\hat{P},\hat{Q}, \hat{c},\hat{s}, \hat{v}^{(2)})$ be a feasible solution. We show how to obtain a feasible solution to SDP-C from it. Indeed, using $\hat{W}$ we can construct by Definition~\ref{definition:SDP-R_to_SDP-C} the Hermitian matrix $\hat{X}^{\hat{W}} \in \C^{n \times n}$. We claim that $y_{C}:= (\hat{P}^{g},\hat{Q}^{g},\hat{X}^{\hat{W}},\hat{P},\hat{Q}, \hat{c},\hat{s},\hat{v}^{(2)})$ satisfies all the constraints of SDP-C. Indeed, clearly $\hat{P}^{g},\hat{Q}^{g},\hat{P},\hat{Q}, \hat{c},\hat{s}, \hat{v}^{(2)}$ satisfy constraints~\eqref{ACOPF-C:voltlimit},~\eqref{ACOPF-C:activepowerbal}-\eqref{ACOPF-C:Qdef_to},~\eqref{ACOPF-C:genpowerlimit}-\eqref{ACOPF-C:capacity}. Moreover, since $\hat{X}^{\hat{W}}_{kk} = \hat{W}_{kk} + \hat{W}_{k'k'} = v_{k}^{(2)}$, then constraint~\eqref{SDP-C:def_volt2} holds. Next, we note that constraints~\eqref{SDP-C:def_cskm} are equivalent to $c_{km} = \Re X_{km}$, $s_{km} = \Im X_{km}$. Hence, since $\hat{c}_{km} = \hat{W}_{km} + \hat{W}_{k'm'}$ and $s_{km} = \hat{W}_{km'} - \hat{W}_{mk'}$, we have that $\hat{c}_{km} = \Re \hat{X}^{\hat{W}}_{km}$ and $\hat{s}_{km} = \Im \hat{X}^{\hat{W}}_{km}$, i.e.,~\eqref{SDP-C:def_cskm} is satisfied. Finally, given that $\hat{W} \succeq 0$, Theorem~\ref{theorem:W_PSD_X^W_PSD} implies that $\hat{X}^{\hat{W}} \succeq 0$ and~\eqref{SDP-C:psdconstraint} holds. Since $y_{C}$ and $y_{R}$ have the same objective value, we conclude that $z_{C} \leq z_{R}$.
\end{proof}

\section{Our disciplined approach to SDPs}

An SDP relaxation with matrix variable $X \succeq 0$  can be reformulated using a clique decomposition technique applied to the underlying power system network~\cite{fukuda+etal00}. This approach results in an equivalent SDP problem but with PSD constraints $X_{i} \succeq 0$ on much smaller matrices than $X$ and with linear constraints. The size of these matrices is relatively small (e.g., for case ACTIVSg70k with $70,000$ nodes, a clique decomposition may find a clique of size $114$). In~\cite{molzahn+etal13,andersen+etal14,anjos+etal21} different clique merging techniques and reformulations have been studied. 

Our approach can be outlined as follows. Given some decomposition $\{ X_{i} \}_{i \in \mathcal{D}}$, for each matrix $X_{i}$, we derive sparse, numerically valid, and stable linear inequalities which outer-approximate the PSD cone $X_{i} \succeq 0$. To maintain speed and numerical robustness we apply modern cut-management techniques, as in~\cite{bienstock+villagra25}, to several cut families, including eigenvector-based cuts~\cite{qualizza+etal12}. Our goal is to outer-approximate the feasible region of SDP-C. As in prior works in sparse SDPs, the philosophy is to keep the formulation as \emph{light} as possible, without adding too many \emph{dense} cuts. Currently, we have experiments with two decomposition heuristics: (i) identifying all the $3$-cliques in the network so that we do not have to add extra variables; and (ii) using a chordal decomposition~\cite{fukuda+etal00} of the network $(\mathcal{B},\mathcal{E})$.

In what follows, we also study the underlying geometry and related algebraic issues, with the aim of delivering \textit{fully valid} linear inequalities.

\subsection{SDP Cuts}\label{sdpcuts}

We denote by $\Hmat_{+}^{n}$ the PSD cone in $\C^{n \times n}$. Let $X_{0} \in \C^{n}$ be Hermitian. Suppose that $X_{0}$ is not positive semidefinite, and consider an eigen-decomposition (normalized) of $X_{0}$, i.e., $X_{0} = Q \Lambda Q^{*}$ where $\{q_{1}, \dots, q_{n}\} \subseteq \C^{n}$ are eigenvectors of $X_{0}$ which correspond to the columns of $Q$, and $\Lambda$ is a diagonal matrix whose elements along the diagonal are the eigenvalues of $X_{0}$ sorted in non-increasing order $\lambda_{1} \geq \cdots \geq \lambda_{n}$. 

Suppose that we want to separate $X_{0}$ from the PSD cone $\Hmat_{+}^{n}$. We discuss two types of cuts. For further discussion of (sparse) cutting planes for QCQPs, see~\cite{qualizza+etal12}. 

\paragraph{Eigenvector-based cuts}

Consider the eigenvector $q_{n} \in \C^{n}$ associated with the eigenvalue $\lambda_{n}$, i.e. $X_{0} q = \lambda_{n} q$ and $||q||_{2} = 1$. Since we assumed that $X_{0}$ is indefinite, we have $\lambda_{n} < 0$. Recall that $X \in \Hmat^{n}_{+}$ if and only if $x^{*} X x \geq 0$ for all $x \in \C^{n}$. Let $A := qq^{*}$. Hence, the following inequality defines a valid linear cut, which we refer to as an \emph{eigen-cut} for $\Hmat_{+}^{n}$,
\begin{equation}\label{eigen_cut}
    \langle A, X \rangle = q^{*} X q \geq  0, \hspace{1em} \forall X \in \Hmat^{n}
\end{equation}
Clearly, $\langle A, X_{0} \rangle = q^\top X_{0} q = \lambda_{n} < 0$. In other words, $X_{0}$ violates this inequality by $-\lambda_{n}$ and its distance to~\eqref{eigen_cut} is $|\lambda_{n}|$.

\begin{remark}
    It is not hard to show that eigen-cuts derived from $2 \times 2$ principal submatrices of $X_{0}$ correspond to the standard Jabr outer-approximation cuts in~\cite{bienstock+villagra25}.
\end{remark}

\paragraph{Projection-based cuts}\label{projection_based_cut}

Next, we derive a cut using the Euclidean projection of $X_{0}$ onto $\Hmat_{+}^{n}$. This is also known as the \emph{maximum-distance cut}. Consider the problem of minimizing $|| X - X_{0} ||_{F}^{2}$ subject to $X \in \Hmat_{+}^{n}$. It is known that the unique minimizer to this SDP is given by $X^{*} := \sum_{\lambda_{i} > 0} \lambda_{i} q_{i} q_{i}^{*}$, see e.g.~\cite{goulart+etal20}. Using a well-known characterization of the projection operator, we can derive the following valid linear inequality for $\Hmat_{+}^{n}$,
\begin{equation}\label{proj_inequality}
    \langle X_{0} - X^{*}, X - X^{*} \rangle \leq 0, \hspace{2em} X \in \Hmat_{+}^{n}.
\end{equation}
The left-hand side of the inequality is real, as both $X_{0} - X^{*}$ and $X - X^{*}$ are Hermitian. Thus, the inequality is well defined. Let $A := X^{*} - X_{0} = \sum_{\lambda_{i} < 0} (-\lambda_{i}) q_{i} q_{i}^{*}$. Then,~\eqref{proj_inequality} can be rewritten as $\langle X^{*} - X_{0}, X \rangle \geq \langle X^{*} - X_{0}, X^{*} \rangle$, i.e., 
\begin{equation}
    \langle A, X \rangle \geq 0  \label{projection_cut}
\end{equation}
since $\langle X^{*} - X_{0}, X^{*} \rangle = 0$. The matrix $X_{0}$ violates this inequality by $\sum_{\lambda_{i} < 0} \lambda_{i}^{2}$ and its distance to~\eqref{projection_cut} is $\sqrt{\sum_{\lambda_{i} < 0} \lambda_{i}^{2}}$.

\paragraph{Cut computations}

Let $A, X \in \Hmat^{n}$. Recall that $\langle A, X \rangle = \sum_{k,m} A_{km}^{*} X_{km}$ is real and consider the summands $A_{km}^{*} X_{km}$ and $A_{mk}^{*} X_{mk}$. We note that
\begin{align*}
    A_{km}^{*} X_{km} &= (\Re A_{km} - j \Im A_{km})(\Re X_{km} + j \Im X_{km}) \\
                      &= \Re A_{km} \Re X_{km} + \Im A_{km} \Im X_{km} \\
                      &+ j (\Re A_{km} \Im X_{km} - \Re X_{km} \Im A_{km}) \\
    A_{mk}^{*} X_{mk} &= (\Re A_{mk} - j \Im A_{mk})(\Re X_{mk} + j \Im X_{mk}) \\
                      &= \Re A_{mk} \Re X_{mk} + \Im A_{mk} \Im X_{mk} \\
                      &+ j (\Re A_{mk} \Im X_{mk} - \Re X_{mk} \Im A_{mk})   
\end{align*}
Since $\Re X_{km} = \Re X_{mk}$, $\Re A_{km} = \Re A_{mk}$, $\Im X_{km} = - \Im X_{mk}$, $\Im A_{km} = - \Im A_{mk}$ we have that $A_{km}^{*} X_{km} + A_{mk}^{*} X_{mk}$ equals $2 ( \Re A_{km} \Re X_{km} + \Im A_{km} \Im X_{km})$. Therefore, $\langle A, X \rangle \geq 0$ can be written as 
\begin{equation}\label{sdp_cuts}
\scalebox{0.9}{$
    \sum_{k=1}^{n} A_{kk} v_{k}^{(2)} + \sum\limits_{\substack{1\le k<m\le n}} 2 ( \Re A_{km} c_{km} + \Im A_{km} s_{km}) \geq 0.
$}
\end{equation}
using definitions~\eqref{SDP-C:def_volt2} and~\eqref{SDP-C:def_cskm}. We conclude this section with a result on the numerical robustness of our methods.
\begin{proposition}
    Let $X_{0} \in \C^{3 \times 3}$ be a complex Hermitian matrix that approximates the outer product of three complex voltage phasors $VV^{*}$, where $V \in \C^{3}$ (hence the entries of $X_{0}$ are well-behaved). Let $\epsilon > 0$, and suppose that the smallest eigenvalue of $X_{0}$, which we denote as $\lambda_{3}$, satisfies $\lambda_{3} > - \epsilon$. Then the distance of $X_{0}$ to $\Hmat_{+}^{3}$ is at most $\sqrt{2} \epsilon$.
\end{proposition}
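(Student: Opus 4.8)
The plan is to reduce the statement to the exact distance formula we already established for projection-based cuts. Recall from the discussion surrounding~\eqref{projection_cut} that the Euclidean (Frobenius) distance from a Hermitian matrix $X_0$ to $\Hmat_{+}^{n}$ equals $\sqrt{\sum_{\lambda_i < 0}\lambda_i^2}$, where the $\lambda_i$ are the eigenvalues of $X_0$. Specializing to $n=3$ with $\lambda_1 \geq \lambda_2 \geq \lambda_3$, the task reduces to bounding $\sum_{\lambda_i < 0}\lambda_i^2$ by $2\epsilon^2$, after which taking square roots yields the claim.

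First I would bound each negative eigenvalue in magnitude. By hypothesis $\lambda_3 > -\epsilon$, and since $\lambda_3$ is the smallest eigenvalue, every eigenvalue satisfies $\lambda_i \geq \lambda_3 > -\epsilon$. Consequently, any eigenvalue that happens to be negative lies in the open interval $(-\epsilon, 0)$ and hence contributes strictly less than $\epsilon^2$ to the sum. If I can show that at most two of the three eigenvalues are negative, then $\sum_{\lambda_i < 0}\lambda_i^2 < 2\epsilon^2$ and the distance is at most $\sqrt{2}\,\epsilon$.

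The key step — and the only place where the hypothesis that $X_0$ approximates $VV^{*}$ with well-behaved entries is used — is to rule out the case in which all three eigenvalues are negative. I would argue through the trace. The diagonal entries of $X_0$ play the role of the squared voltage magnitudes $|V_k|^2$, which are nonnegative, so $\trace(X_0) = \sum_k (X_0)_{kk} \geq 0$. Since the largest eigenvalue dominates the arithmetic mean, $\lambda_1 \geq \trace(X_0)/3 \geq 0$. Hence $\lambda_1$ is nonnegative, so at most the two eigenvalues $\lambda_2, \lambda_3$ can be negative, and the bound from the previous paragraph applies directly.

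I expect the main obstacle to be precisely this counting argument rather than any computation: without the structural assumption that $X_0$ is a (slightly perturbed) rank-one outer product with nonnegative diagonal, all three eigenvalues could in principle be negative, which would yield only the weaker bound $\sqrt{3}\,\epsilon$. The factor $\sqrt{2}$ in the statement is exactly the signature of the fact that a nonzero rank-one PSD target forces at least one nonnegative eigenvalue, leaving at most two negative ones to control. Making the notion of ``approximation'' precise — for instance by noting that the diagonal entries are exactly the nonnegative quantities $v_k^{(2)}$ — is what closes this gap cleanly and justifies the $\trace(X_0) \geq 0$ estimate above.
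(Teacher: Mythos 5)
Your proof is correct and follows essentially the same route as the paper's: both use the projection-based distance formula $\sqrt{\sum_{\lambda_i<0}\lambda_i^2}$, rule out three negative eigenvalues via positivity of $\trace(X_0)$ (the diagonal entries being the nonnegative quantities $v_k^{(2)}$), and bound the at most two remaining negative eigenvalues by $\epsilon$ in magnitude to obtain $\sqrt{2}\,\epsilon$. Your added remark that $\lambda_1 \geq \trace(X_0)/3 \geq 0$ is a slightly more explicit justification of the counting step, but it is the same argument.
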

\begin{proof}
    Let $\lambda_{1} \geq \lambda_{2} \geq \lambda_{3}$ be the eigenvalues of $X_{0}$. Since the trace of $X_{0}$ is always strictly positive, we have that $X_{0}$ has at most two negative eigenvalues. Suppose that $\lambda_{2}, \lambda_{3} < 0$. Then by assumption $- \lambda_{2}, - \lambda_{3} < \epsilon$. Therefore, by our previous discussion we have that
    the distance of $X_{0}$ to the $\Hmat_{+}^{3}$ is at most $\sqrt{ \epsilon^{2} + \epsilon^{2}} = \sqrt{2} \epsilon$.
\end{proof}

\subsection{The Algorithm}

First, we define the linearly constrained model $M_{0}$ as model~\eqref{SDP-C} with only linear constraints and no explicit dependence on $X$, i.e., without~\eqref{SDP-C:def_volt2},~\eqref{SDP-C:def_cskm}, and~\eqref{SDP-C:psdconstraint}. We denote by $M$ our dynamic relaxation at some iteration of our algorithm, see Algorithm~\ref{thealgorithm}. We set $\mathcal{D}$ to be the set of all the $3$-cycles of the network $(\mathcal{B},\mathcal{E})$. Hence, $X_{i}$ is a $3\times 3$ Hermitian matrix for each $i \in \mathcal{D}$. 

Following~\cite{bienstock+villagra25}, in every round of our procedure, linear constraints will be added to and removed from $M$. Given a solution $\overline{y}$ to $M$, line~\eqref{violated_ineq} of~\eqref{thealgorithm} refers to the application of cut management techniques to separate $\overline{y}$ from the Jabr, i2 and Limit inequalities, see~\cite{bienstock+villagra25}. Next, we check whether the matrices $\{ X_{i}(\overline{y}) \}_{i \in D}$ are PSD by calling NumPy's SVD implementation. If $X_{i}(\overline{y})$ is not PSD, we separate it from the PSD cone $X_{i} \succeq 0$ using the linear inequalities~\eqref{sdp_cuts}. We use maximum-distance cuts with at most two negative eigenvalues. If the round $r$ surpasses the threshold $r^{*}$, we augment $\mathcal{D}$ by computing a chordal decomposition~\cite{fukuda+etal00} of the network. For further details on cut management please refer to~\cite{bienstock+villagra25}.

Finally, other input parameters for our procedure are: a time limit $T>0$; a number of admissible iterations without sufficient objective improvement $T_{ftol} \in \mathbb{N}$; and a threshold for relative objective improvement $\epsilon_{ftol} > 0$.

\begin{algorithm}
\caption{Cutting-Plane Algorithm}\label{thealgorithm}
\begin{algorithmic}[1]
\Procedure {Cutplane}{}
\State Initialize $r \gets 0$, $M \gets M_{0}$, $z_{0} \gets + \infty$
\State $\mathcal{D} \gets $ all the $3$-cycles of the network $(\mathcal{B},\mathcal{E})$
\While{$t < T$ and $r < T_{ftol}$}
\State $z \gets \min M$ and $\bar{y} \gets \text{argmin} \, M$
\State Apply cut management techniques in~\cite{bienstock+villagra25}\label{violated_ineq}
\State Check whether matrices $\{ X_{i}(\overline{y}) \}_{i \in D}$ are PSD
\State Compute cuts~\eqref{sdp_cuts} for $X_{i}(\overline{y})$ nonPSD
\State Drop cuts of age $\geq T_{age}$ whose slack is $\geq$ $\epsilon_{j}$
\State \textbf{If } $r \geq  r^{*}$ \textbf{ do } \textsc{CycleHierarchy}
\State \textbf{If } $z - z_{0} < z_{0} \cdot \epsilon_{ftol}$ \textbf{ do } $r \gets r+1$
\State \textbf{Else do} $r \gets 0$
\State $z_{0} \gets z$
\EndWhile
\EndProcedure
\end{algorithmic}
\end{algorithm}



\section{Preliminary computational results}


We ran our experiments on a Apple M2 Pro machine with $16$GB RAM. We used Gurobi 11.0.0 with its default homogeneous self-dual embedding interior-point algorithm (without \emph{Crossover}). We set the parameters \emph{Bar Homogeneous} and \emph{Numeric Focus} equal to $1$. Barrier convergence tolerance and absolute feasibility and optimality tolerances were set to $10^{-6}$. The Jabr SOC bounds in Table~\ref{table:loads_perturbed}, and the AC primal bounds in Tables~\ref{table:newbounds}-\ref{table:loads_perturbed} are reported in~\cite{bienstock+villagra25}.

\paragraph{Tighter lower bounds} In Table~\ref{table:newbounds} we report on bounds obtained by Algorithm~\ref{thealgorithm} with a time limit of $1200$ seconds as in Section 7.1.1~\cite{bienstock+villagra25}. After $r^{*} = 5$ rounds, the set $\mathcal{D}$ is augmented by cliques of length at most $5$. The column \#Cliques reports the number of 3-, 4-, and 5-cliques, respectively, handled in formulation $M$ at the final iteration. The column `EigRatio' shows the smallest ratio, among the matrices $X_{i}(\overline{y})$, between the largest and the second-largest eigenvalues -- a commonly used proxy for assessing rank-one behavior. Column `DInfs' reports on the maximum dual infeasibility error for the last solve, see Section 6~\cite{bienstock+villagra25}.

\paragraph{Loads perturbed} The load of each bus was perturbed by a Gaussian $(\mu,\sigma) = (0.01 \cdot P_{d}, 0.01 \cdot P_{d})$, where $P_{d}$ denotes the original load, subject to the newly perturbed load being non negative~\cite{bienstock+villagra25}. We warm-start Algorithm~\ref{thealgorithm} with precomputed cuts (Jabr, i2, and limit) in~\cite{bienstock+villagra25} with a time limit of $600$ seconds. Our SDP cuts are only for $3$-cycles in the network. Column `DInfs' reports on the maximum dual infeasibility error for the last solve, while column `Added' reports on the number of cuts in the formulation at the last solve.

\begingroup
\setlength{\tabcolsep}{3pt} 
\begin{table*}
\caption{New Lower Bounds with Linearly-Constrained Relaxations}
\centering
\begin{tabular}{ @{} l r r r r r r r r r r @{} }
\toprule
& \multicolumn{4}{c}{Advanced Cutting-Plane} & \multicolumn{1}{c}{Cutting-plane~\cite{bienstock+villagra25}} & \\
\cmidrule(l{0.5em}r{0.35em}){2-5} 
\multicolumn{1}{l}{Case} & Objective & \#Cliques & DInf & EigRatio & Objective & AC & \\
\midrule
14 & 8080.96 & (5,0,0) & 2.36e-11 & 682972.8 & 8074.70 & 8081.18 &  \\
1354pegase & 74013.67 & (87,111,44) & 3.31e-09 & 1.11 & 74011.00 & 74069.35 &  \\
9241pegase & 310226.80 & (8673,1029,484) & 4.30e-07 & 1.46 & 309221.81 & 315911.56 &  \\
13659pegase & 380055.79 & (8673,0,0) & 3.70e-05 & 1.83 & 379084.55 & 386108.81 &  \\
\bottomrule
\end{tabular}
\label{table:newbounds}
\end{table*}
\endgroup

\begingroup
\setlength{\tabcolsep}{3pt} 
\begin{table*}
\caption{Warm-Started Relaxations, Loads perturbed Gaussian $(\mu,\sigma) = (0.01 \cdot P_{d}, 0.01 \cdot P_{d})$}
\centering
\begin{tabular}{ @{} l r r r r r r r r r r r r r r r@{} }
\toprule
& \multicolumn{4}{c}{Advanced Cutting-Plane} & \multicolumn{4}{c}{Cutting-plane~\cite{bienstock+villagra25}} & \\
\cmidrule(l{0.5em}r{0.35em}){2-5} \cmidrule(l{0.5em}r{0.40em}){6-9}
\multicolumn{1}{l}{Case} & Objective & Time (s) & DInf & Added & Objective & Time (s) & DInfs & Added & Jabr SOC & AC & \\
\midrule
9241pegase & 310225.93 & 307.67 & 8.01e-10 & 73940 & 309288.30 & 15.01 & 9.52e-07 & 29875 & - & 315979.53 &  \\
ACTIVSg10k & 2475135.37 & 7.43 & 5.06e-07 & 22605 & 2475040.71 & 4.94 & 2.55e-08 & 18183 & - & 2484093.15 &  \\
13659pegase & 380518.18 & 429.05 & 5.46e-08 & 73889 & 379742.50 & 31.09 & 3.22e-06 & 37297 & - & 386765.25 &  \\
ACTIVSg25k & 5989222.32 & 32.36 & 7.21e-08 & 60690 & 5988885.07 & 20.03 & 1.34e-08 & 43851 & 5949381.04 & 6013477.05 &  \\
78484epigrids-api & 15866507.73 & 118.11 & 1.27e-05 & 315589 & 15862318.35 & 82.28 & 4.04e-07 & 240576 & 15859950.52 & - \\
78484epigrids-sad & 15175077.19 & 124.12 & 5.33e-06 & 397189 & 15176865.38 & 96.74 & 3.86e-06 & 313587 & 15174716.43 & 15316872.94 \\
\bottomrule
\end{tabular}
\label{table:loads_perturbed}
\end{table*}
\endgroup

\section{Future work}


We leave for future work further experimentation with SDP cut management techniques, and extensive benchmarking to public libraries. Furthermore, based on~\cite{molzahn+hiskens14,josz+molzahn18}, we are currently investigating how careful selection of low-order terms has the potential to strengthen our bounds.

\appendix

\begin{proposition}
    Let $X \in \C^{n \times n}$ and $W \in \R^{2n \times 2n}$ be arbitrary rank-one matrices. Then, there does not exist an orthogonal matrix $Q \in \R^{2n \times 2n}$ such that $W = Q L(X) Q^{\top}$.
\end{proposition}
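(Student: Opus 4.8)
The plan is to argue by a rank obstruction. The crucial observation is that an orthogonal similarity $W = Q L(X) Q^{\top}$ preserves rank, since $Q$ and $Q^{\top}$ are invertible; hence it suffices to show that $\mathrm{rank}(L(X)) \neq \mathrm{rank}(W) = 1$ whenever $X$ is rank-one. I would therefore reduce the entire statement to a single structural fact about the realification map, namely the rank-doubling identity $\mathrm{rank}_{\R}(L(X)) = 2\,\mathrm{rank}_{\C}(X)$.

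To establish this identity I would exploit that $L(X)$ is precisely the real-matrix representation of multiplication by $X$. Concretely, under the standard identification $\iota : \C^{n} \to \R^{2n}$ sending $u = a + jb$ (with $a,b \in \R^{n}$) to $(a,b)^{\top}$, a direct computation from the definition of $L$ gives $L(X)\,\iota(u) = \iota(Xu)$ for every $u \in \C^{n}$. Consequently the real column space of $L(X)$ equals $\iota(\mathrm{im}\, X)$. Since $\iota$ is an $\R$-linear isomorphism and a complex subspace of complex dimension $r$ has real dimension $2r$, I obtain $\mathrm{rank}_{\R}(L(X)) = \dim_{\R} \iota(\mathrm{im}\,X) = 2\dim_{\C}(\mathrm{im}\,X) = 2\,\mathrm{rank}_{\C}(X)$.

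For a rank-one $X$ this yields $\mathrm{rank}(L(X)) = 2$, whereas $\mathrm{rank}(W) = 1$; since no orthogonal similarity can alter the rank, no orthogonal $Q$ with $W = Q L(X) Q^{\top}$ can exist. I note that symmetry of $W$ plays no role here: the argument depends only on rank, so it applies to arbitrary rank-one $X$ and $W$, whether or not they are Hermitian or symmetric.

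The computation is short, so the only step requiring genuine care is the intertwining identity $L(X)\,\iota(u) = \iota(Xu)$; everything else is routine linear algebra. An alternative that sidesteps even this computation is a parity argument: a block computation shows that the real matrix $J := L(jI) = \left(\begin{smallmatrix} 0 & -I \\ I & 0 \end{smallmatrix}\right)$ commutes with $L(X)$, so $\ker L(X)$ is $J$-invariant and, because $J^{2} = -I$ endows it with a complex structure, has even real dimension; hence $\mathrm{rank}(L(X)) = 2n - \dim_{\R}\ker L(X)$ is even and in particular cannot equal $1$. I would likely present the rank-doubling version, as it is the more informative of the two and pins down $\mathrm{rank}(L(X))$ exactly.
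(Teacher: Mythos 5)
Your proof is correct, and at the top level it coincides with the paper's: reduce everything to rank preservation under orthogonal similarity and invoke the rank-doubling identity $\mathrm{rank}\,L(X) = 2\,\mathrm{rank}\,X$, which is exactly the paper's auxiliary lemma. Where you genuinely differ is in how that identity is proved. The paper works with kernels: it shows $Xz = 0$ iff $L(X)\phi(z) = 0$ for $\phi(x+jy) = (x,y)^\top$, and then counts dimensions by exhibiting, for each basis vector $z$ of $\ker X$, the two $\R$-independent kernel vectors $\phi(z)$ and $\phi(jz)$, concluding via rank--nullity that $\dim\ker L(X) = 2\dim_{\C}\ker X$. You work with images: from the intertwining identity $L(X)\,\iota(u) = \iota(Xu)$ and the surjectivity of $\iota$ you identify the column space of $L(X)$ as precisely $\iota(\mathrm{im}\,X)$, whose real dimension is $2\dim_{\C}\mathrm{im}\,X$. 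The two arguments are dual, but yours is tighter: it reads off the rank in one step and avoids the paper's somewhat delicate counting (the paper must argue that the vectors $\phi(z), \phi(jz)$, $z \in B$, are jointly $\R$-independent and that the resulting inequality cannot be strict). Your alternative parity argument --- $J := L(jI)$ commutes with $L(X)$ and $J^{2} = -I$, so $\ker L(X)$ carries a complex structure, has even real dimension, and hence $\mathrm{rank}\,L(X)$ is even and can never equal $1$ --- does not appear in the paper at all and is the most economical route to the proposition, though it yields strictly less information than the exact rank-doubling identity.
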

\begin{proof}
    Assume, to the contrary, that there exists some orthogonal matrix $Q \in \R^{2n \times 2n}$ such that $W = Q L(X) Q^{\top}$. Since orthogonal similarity transformations preserve rank, and Lemma~\ref{appendix:W_and_L(X)} implies that $L(X)$ is rank-two, we have that $\text{rank } W = 2$, which is a contradiction. 
\end{proof}

\begin{lemma}\label{appendix:W_and_L(X)}
    Let $X \in \mathbb{C}^{n \times n}$. Then $\text{rank } L(X) = 2 \cdot \text{rank } X$.
\end{lemma}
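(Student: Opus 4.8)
The plan is to recognize $L(X)$ as the realification of $X$ viewed as an $\R$-linear operator, and then to compare kernels. Concretely, let $\phi : \C^{n} \to \R^{2n}$ be the standard $\R$-linear isomorphism sending $u + jv \mapsto (u,v)^{\top}$ with $u,v \in \R^{n}$. Writing $X = A + jB$ with $A = \Re X$ and $B = \Im X$, the computation $X(u+jv) = (Au - Bv) + j(Bu + Av)$ shows that $\phi(Xz) = L(X)\,\phi(z)$ for every $z \in \C^{n}$. In other words, under the identification $\phi$, the real matrix $L(X)$ represents exactly the same map as multiplication by $X$, now regarded as a transformation of $\R^{2n}$.

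First I would use this identity to transfer kernels: since $\phi$ is a bijection, $z \in \ker X$ if and only if $\phi(z) \in \ker L(X)$, so $\phi(\ker X) = \ker L(X)$ and the two kernels have equal dimension as real vector spaces. Next, $\ker X$ is a \emph{complex} subspace of $\C^{n}$ of complex dimension $n - \text{rank } X$, hence its real dimension is $2(n - \text{rank } X)$. Applying the rank–nullity theorem to $L(X)$ over $\R$ then gives $\text{rank } L(X) = 2n - \dim_{\R}\ker L(X) = 2n - 2(n - \text{rank } X) = 2\,\text{rank } X$, as claimed.

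The only delicate point is the bookkeeping: one must verify the compatibility identity $\phi(Xz) = L(X)\,\phi(z)$ carefully, matching the real and imaginary parts of $Xz$ against the block structure of $L$, and one must remember that $\ker X$ is a genuine complex subspace, so passing from its complex to its real dimension is exactly where the crucial factor of two enters. Everything else is a routine application of rank–nullity, together with the fact that the rank of a real matrix is unchanged whether computed over $\R$ or $\C$.

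As an alternative that avoids the kernel argument, I could diagonalize the complex structure directly. One checks that the unitary matrix $U = \tfrac{1}{\sqrt{2}}\left(\begin{smallmatrix} I & I \\ jI & -jI \end{smallmatrix}\right)$ satisfies $U^{*} L(X)\, U = \left(\begin{smallmatrix} \overline{X} & 0 \\ 0 & X \end{smallmatrix}\right)$, so that $L(X)$ is unitarily similar to the block-diagonal matrix with diagonal blocks $\overline{X}$ and $X$. Since rank is invariant under similarity and $\text{rank } \overline{X} = \text{rank } X$, this again yields $\text{rank } L(X) = 2\,\text{rank } X$. I would present the kernel argument as the main proof, since it is coordinate-light and conceptual, and relegate the explicit diagonalization to a remark.
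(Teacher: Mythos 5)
Your main argument is correct and is essentially the paper's own proof: both establish the correspondence $z \in \ker X \iff \phi(z) \in \ker L(X)$ via the same block computation, extract the factor of two from the complex structure of $\ker X$, and conclude by rank--nullity. If anything, your packaging is cleaner: the paper proves the dimension-doubling step by hand, exhibiting the pairs $\phi(z), \phi(jz)$ for $z$ ranging over a basis of $\ker X$ and arguing their real-linear independence, whereas you simply invoke the standard fact that a complex subspace of complex dimension $d$ has real dimension $2d$. Your alternative via the unitary similarity $U^{*} L(X) U = \mathrm{diag}(\overline{X}, X)$ is also correct (the block computation checks out) and is a genuinely different route not taken by the paper; it buys a one-line conclusion, at the price of passing through $\C$, where one must then use that the rank of the real matrix $L(X)$ is the same whether computed over $\R$ or $\C$ --- a point you correctly flag.
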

\begin{proof}
    Let $z = x + j y \in \mathbb{C}^{n}$ where $x, y \in \mathbb{R}^{n}$. Consider the $\mathbb{R}$-linear map $\phi: \mathbb{C}^{n} \to \mathbb{R}^{2n}$ given by $\phi(z) = (x,y)^\top$. We note that
    \begin{align*}
        X z = 0 &\iff (\Re X + j \Im X) (x + j y) = 0 \\
        &\iff (\Re X x - \Im X y) + j (\Im X x + \Re X y) = 0 \\
        &\iff 
        \begin{pmatrix}
            \Re X & - \Im X \\
            \Im X & \Re X
        \end{pmatrix}
        \begin{pmatrix}
            x \\
            y
        \end{pmatrix}
        = 0 \\
        &\iff L(X)
        \begin{pmatrix}
            x \\
            y
        \end{pmatrix}
        = 0.
    \end{align*}
    Next, let $B$ be a basis for $\ker V$. By the above and since $\phi$ is injective, we have that $\dim \ker L(X) \geq |B|$. We show that $\dim \ker L(X) = 2 \cdot |B|$. For an arbitrary $z = x + j y \in \mathbb{C}^{n}$, we remark that clearly $z$ and $j z$ $\mathbb{C}$-linearly dependent, while $\phi(z)$ and $\phi(j z)$ are $\mathbb{R}$-linearly independent. Indeed, $\phi(jz) = (-y,x)^\top$ hence $\phi(z)$ is orthogonal to $\phi(jz)$. Since for different $z,z' \in B$, we have that $z$ and $jz'$ are $\mathbb{C}$-linearly independent, hence  $\dim \ker L(X) \geq 2 \cdot |B|$. Finally, this inequality cannot be strict since if it were, there would be some $(\hat{x},\hat{y}) \in \ker L(X)$ such that $\phi^{-1}(\hat{x},\hat{y}) \in \ker V$ is not spanned by $B$.
\end{proof}

\section*{Acknowledgment}
This work was supported by the Air Force Office of Scientific Research (AFOSR) under Grant No.~FA9550-23-1-0697.


\bibliographystyle{IEEEtran}
\bibliography{pscc2026.bib}
\end{document}